\newtheorem{thm}{Theorem}
\newtheorem{lem}{Lemma}
\newtheorem{rmk}{Remark}
\newcommand{\length}{\operatorname{length}}
\begin{document}

\title{On the Jayne-Rogers theorem }
\author{Sergey Medvedev}
\address{South Ural State University,  Chelyabinsk, Russia}
\email{medvedevsv@susu.ru}

\begin{abstract}
In 1982, J.E.~Jayne and C.A.~Rogers proved that a mapping $f \colon X \rightarrow Y$ of an absolute Souslin-$\mathcal{F}$ set $X$ to a metric space $Y$ is $\mathbf{\Delta}^0_2$-measurable if and only if it is piecewise continuous. We now give a similar result for a perfectly paracompact first-countable space $X$ and a regular space $Y$.
\end{abstract}

\keywords
{Souslin-$\mathcal{F}$ set; completely Baire space; perfectly paracompact space; $\mathbf{\Delta}^0_2$-measurable mapping; piecewise continuous mapping.}

\subjclass[2010]{54H05, 03E15}

\maketitle

In 1982, J.E.~Jayne and C.A.~Rogers \cite{JR} proved the following

\begin{thm}[Jayne--Rogers]\label{JR}
If $X$ is an absolute Souslin-$\mathcal{F}$ set and $Y$ is a metric space, then $f \colon X \rightarrow Y$ is $\mathbf{\Delta}^0_2$-measurable if and only if it is piecewise continuous.
\end{thm}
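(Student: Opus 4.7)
The easy direction is routine. If $X = \bigcup_n X_n$ with each $X_n$ closed in $X$ and $f|_{X_n}$ continuous, then for any open $U \subseteq Y$ each $(f|_{X_n})^{-1}(U)$ is open in $X_n$, hence $F_\sigma$ in $X_n$ (since $X$ is metrizable, being a subset of a complete metric space), and therefore $F_\sigma$ in $X$. Thus $f^{-1}(U) = \bigcup_n (f|_{X_n})^{-1}(U)$ is $F_\sigma$, so $f$ is $\mathbf{\Delta}^0_2$-measurable.

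For the converse, my plan is to run a transfinite Cantor--Bendixson-style derivative. For any closed $F \subseteq X$, put
$$
D(F) = \bigl\{ x \in F : f|_F \text{ is discontinuous on every relatively open neighborhood of } x \text{ in } F \bigr\},
$$
which is closed in $F$. Set $X_0 = X$, $X_{\alpha+1} = D(X_\alpha)$, and $X_\lambda = \bigcap_{\alpha < \lambda} X_\alpha$ at limits. Each $X_\alpha$ is closed in $X$, hence itself absolute Souslin-$\mathcal{F}$. Granting the key lemma below---that $D(F) \subsetneq F$ for every nonempty closed $F \subseteq X$---the strictly decreasing sequence $(X_\alpha)$ reaches $\emptyset$ at some countable ordinal $\alpha^* < \omega_1$ by hereditary Lindelöfness of the separable metrizable space $X$. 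Then $X = \bigsqcup_{\alpha < \alpha^*}(X_\alpha \setminus X_{\alpha+1})$, and every point of each piece has a relative neighborhood in $X_\alpha$ on which $f$ is continuous. Extracting a countable subcover (by Lindelöfness) and writing each such relatively open subset of the metrizable $X_\alpha$ as an $F_\sigma$ in $X$ yields a countable cover of $X$ by closed sets on which $f$ is continuous.

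Everything reduces to the key lemma: if $F$ is a nonempty absolute Souslin-$\mathcal{F}$ set and $f|_F$ is $\mathbf{\Delta}^0_2$-measurable, then $f|_F$ is continuous on some nonempty relatively open subset of $F$. My approach is to fix a countable base $\{V_n\}_{n \in \omega}$ for the separable image $f(F)$, decompose $(f|_F)^{-1}(V_n) = \bigcup_k A_{n,k}$ and $(f|_F)^{-1}(Y \setminus V_n) = \bigcup_k B_{n,k}$ into $F_\sigma$ pieces in $F$, and construct by induction on $n$ a shrinking sequence of nonempty relatively open $W_n \subseteq F$ each contained in the relative interior of some chosen piece $A_{n,k_n}$ or $B_{n,k_n}$. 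On a sufficiently small common neighborhood produced by the construction, the preimage of every $V_n$ is relatively open, forcing continuity. This construction is the main obstacle: absolute Souslin-$\mathcal{F}$ sets need not be Baire, so the nonemptiness of the $W_n$ and of a common limit point cannot be handed over to the Baire category theorem and must instead be extracted from the Souslin scheme defining $F$, typically via the classical Lusin separation theorem for analytic sets combined with a Cantor-scheme induction driven by the $F_\sigma$-decompositions.
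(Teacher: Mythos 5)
Your reduction of the theorem to the ``key lemma'' is where the argument breaks down: that lemma is false as stated. Take $F = X = \mathbb{Q}$ (an $F_\sigma$ subset of $\mathbb{R}$, hence an absolute Souslin-$\mathcal{F}$ set), enumerate it as $\{q_n \colon n \in \omega\}$, let $Y = \omega$ with the discrete metric, and set $f(q_n) = n$. Every subset of $\mathbb{Q}$ is simultaneously $F_\sigma$ and $G_\delta$ in $\mathbb{Q}$, so $f$ is $\mathbf{\Delta}^0_2$-measurable; yet $f$ is continuous at no point (continuity at $q_n$ would force $q_n$ to be isolated), so it is certainly not continuous on any nonempty relatively open subset of $\mathbb{Q}$. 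Hence $D(\mathbb{Q}) = \mathbb{Q}$ and your transfinite derivative stalls at the very first step, even though $f$ is trivially piecewise continuous (the singletons $\{q_n\}$ are closed pieces). The issue you flag at the end is not a technical obstacle but a genuine obstruction: the equivalence ``piecewise continuous if and only if the discontinuity derivative terminates at $\emptyset$'' is a Baire-category fact, valid for completely Baire domains, and it fails outright for non-Baire absolute Souslin-$\mathcal{F}$ sets such as $\mathbb{Q}$. No Cantor-scheme or Lusin-separation refinement can rescue a false statement, so the hard direction is not established.

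For contrast, the paper (following Ka\v{c}ena--Motto Ros--Semmes) runs the category argument in the completion rather than inside $X$: assuming $f$ is $\mathbf{\Sigma}^0_2$-measurable but not piecewise continuous, it constructs a countable set $D \subset X$ homeomorphic to $\mathbb{Q}$ together with disjoint open sets $U_n \ni f(d_n)$ such that $\overline{D} \subset X$ and $D = \overline{D} \cap f^{-1}\bigl(\bigcup_n U_n\bigr)$, the closure being taken in the (completely Baire) ambient space. Since $\overline{D}$ is a Baire space with countable dense-in-itself subset $D$, the set $D$ cannot be $G_\delta$ in $\overline{D}$, so $f^{-1}\bigl(\bigcup_n U_n\bigr)$ is not $\mathbf{\Delta}^0_2$, a contradiction. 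The witness to non-piecewise-continuity is thus a set whose closure lives in a Baire space --- exactly the resource your derivative scheme, confined to closed subsets of $X$ itself, does not have.
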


The original proof of this theorem is long and quite complicated. Therefore Ka\v{c}ena, Motto Ros, and Semmes \cite{KMS} presented a new short proof of Theorem~\ref{JR}. Moreover, they showed that Theorem~\ref{JR} holds for a regular space $Y$. A further sharpening of the Jayne--Rogers theorem is connected with the study of $\mathbf{\Sigma}^0_2$-measurable mappings which are not piecewise continuous. Put this way, Solecki \cite[Theorem 3.1]{S98} demonstrated that, given a $\mathbf{\Sigma}^0_2$-measurable mapping $f \colon X \rightarrow Y$ of an analytic space $X$ to a separable metric space $Y$, either $f$ is piecewise continuous or $X$ contains a closed subset $D$ homeomorphic to the Cantor set $2^\omega$ such that the restriction $f | D$ is not $\mathbf{\Delta}^0_2$-measurable. By \cite[Corollary 6]{KMS}, the last statement is valid for  an absolute Souslin-$\mathcal{F}$ set $X$ and a regular space $Y$.

Piecewise continuous mappings between Polish spaces have recently been investigated by R.~Carroy and B.D.~Miller~\cite{CM}.

\textbf{Notation.} For all undefined terms see~\cite{Eng}.

A space $X$ is said to be a \textit{Souslin}-$\mathcal{F}$ set in $Z$ if $X$ is a result of the $\mathcal{A}$-operation applied to a system of closed subsets of $Z$. In particular, a space $X$ is an \textit{absolute Souslin}-$\mathcal{F}$ set if it is metrizable and a Souslin-$\mathcal{F}$ set in the completion $\tilde{X}$ of $X$ under its compactible metric.

A space $X$ is called \textit{perfectly paracompact} if $X$ is paracompact and each closed subset of $X$ is of type $G_\delta$ in $X$. A space $X$ is called a \textit{Baire space} if the intersection of countably many dense open sets in $X$ is dense. We call a space \textit{completely Baire} if every closed subspace of it is Baire.

A mapping $f \colon X \rightarrow Y$ is said to be

\begin{enumerate}[$\bullet$]

\item  $\mathbf{\Delta}^0_2$-\textit{measurable} if $f^{-1}(U) \in \mathbf{\Delta}^0_2(X)$ for every open set $U \subset Y$,

\item  $\mathbf{\Sigma}^0_2$-\textit{measurable} if $f^{-1}(U) \in \mathbf{\Sigma}^0_2(X)$ for every open set $U \subset Y$,

\item  \textit{piecewise continuous} if $X$ can be covered by a sequence $X_0, X_1,  \ldots$ of closed sets such that the restriction $f | X_n$ is continuous for every $n \in \omega$.
\end{enumerate}

Now we are ready to give the main result of the paper.

\begin{thm}\label{Th-m}
Let $Y$ be a regular space and $X$ be a Souslin-$\mathcal{F}$ set in a regular completely Baire space $Z$ such that $X$ is a perfectly paracompact space satisfying the first axiom of countability. Then $f \colon X \rightarrow Y$ is a $\mathbf{\Delta}_2^0$-measurable mapping if and only if it is piecewise continuous.
\end{thm}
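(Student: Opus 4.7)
The implication ``piecewise continuous $\Rightarrow \mathbf{\Delta}^0_2$-measurable'' is the easy direction. If $X = \bigcup_n X_n$ with each $X_n$ closed in $X$ and each $f|X_n$ continuous, then for every open $U \subseteq Y$ the intersection $f^{-1}(U) \cap X_n$ is open in the closed subspace $X_n$. Closed subspaces inherit perfect paracompactness, so this open set is $F_\sigma$ in $X_n$ and hence $F_\sigma$ in $X$. The same argument applied to $Y \setminus U$ gives $X \setminus f^{-1}(U) \in \mathbf{\Sigma}^0_2(X)$, so $f^{-1}(U) \in \mathbf{\Delta}^0_2(X)$.

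For the converse, I plan to follow the two-step strategy of Ka\v{c}ena, Motto Ros, and Semmes, adapted to the present weaker hypotheses. The first step is a \emph{Key Lemma}: for every nonempty closed $F \subseteq X$ there is a nonempty relatively open $W \subseteq F$ on which $f|W$ is continuous. I will use (i)~that $F$, as a Souslin-$\mathcal{F}$ set in the completely Baire space $Z$, is itself Baire (the standard fact that the $\mathcal{A}$-operation preserves Baireness in a completely Baire ambient); (ii)~first countability, which reduces continuity at a point to sequential continuity and allows a prospective discontinuity at $x$ of $f|F$ to be witnessed by a countable family of open sets of $Y$, obtained from a local base in $X$ at $x$ via the regularity of $Y$; and (iii)~the elementary fact that, in a Baire space, the boundary of any $\mathbf{\Delta}^0_2$ set $A = \bigcup_n C_n$ is contained in the meager $F_\sigma$ set $\bigcup_n (C_n \setminus \operatorname{int} C_n)$. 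A Baire-category argument then exhibits a nonempty open subset of $F$ consisting of continuity points of $f|F$.

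The second step is the transfinite recursion $F_0 = X$, $F_{\alpha+1} = F_\alpha \setminus W_\alpha$ (with $W_\alpha$ supplied by the Key Lemma applied to $F_\alpha$), and $F_\lambda = \bigcap_{\alpha < \lambda} F_\alpha$ at limits. Perfect paracompactness expresses each $W_\alpha$ as a countable union of closed subsets of $X$ on each of which $f$ is continuous, so once the recursion terminates at a countable ordinal $\gamma$ with $F_\gamma = \emptyset$, the collection $\{W_\alpha : \alpha < \gamma\}$ yields a countable covering of $X$ by closed sets on which $f$ is continuous, establishing piecewise continuity. The step I expect to be most delicate---and the main obstacle---is verifying that $\gamma$ is countable: the plan is to combine the hereditary Baire property of $X$ (descending from the Souslin-$\mathcal{F}$ representation in $Z$) with first countability and perfect paracompactness to preclude a strictly decreasing $\omega_1$-chain of closed subsets of $X$.
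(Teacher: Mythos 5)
Your easy direction is fine and matches the paper's. The hard direction, however, contains a genuine gap, and it sits in the Key Lemma itself rather than only in the termination issue you flag. The ingredients you list --- Baireness of $F$, nowhere density of the boundary of a $\mathbf{\Delta}^0_2$ set, first countability of $X$ and regularity of $Y$ --- can at best show that the continuity points of $f|F$ form a comeager subset of $F$ (and even that is delicate here, since without second countability of $Y$ the discontinuity set is a priori an \emph{uncountable} union of boundaries). They cannot show that the continuity set has nonempty interior in $F$, which is what your Key Lemma asserts and what the exhaustion argument needs. The gap between ``comeager continuity set on every closed set'' and ``continuity set with nonempty interior'' is exactly where the difficulty of the Jayne--Rogers theorem lives: Thomae's function $g\colon\mathbb{R}\to\mathbb{R}$ (with $g(p/q)=1/q$ in lowest terms and $g=0$ on the irrationals) is $\mathbf{\Sigma}^0_2$-measurable and its restriction to every nonempty closed set has comeager continuity points, yet it is not piecewise continuous, since $g^{-1}((0,\infty))=\mathbb{Q}\notin\mathbf{\Delta}^0_2$. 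So any proof of the Key Lemma must exploit the full $\mathbf{\Delta}^0_2$ hypothesis in a global, nontrivial way; indeed, over a Polish $X$ your Key Lemma together with the exhaustion is essentially equivalent to the Jayne--Rogers theorem, so a soft category argument cannot deliver it. The termination problem is also real --- the $X$ of the theorem need not be hereditarily Lindel\"of, so nothing precludes a strictly decreasing $\omega_1$-chain of closed sets, and an uncountable family of pieces does not yield piecewise continuity --- but it is secondary to the unproved Key Lemma.

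For comparison, the paper (following Ka\v{c}ena, Motto Ros, and Semmes, whose method is not the derived-set scheme you attribute to them) argues contrapositively. If $f$ is $\mathbf{\Sigma}^0_2$-measurable but not piecewise continuous, Lemma~\ref{L-m} recursively constructs a countable dense-in-itself set $D=\{d_n\colon n\in\omega\}$ with $\overline{D}\subset X$ and pairwise disjoint open sets $U_n\ni f(d_n)$ such that $D=\overline{D}\cap f^{-1}\bigl(\bigcup_n U_n\bigr)$. Since $\overline{D}$ is closed in the completely Baire space $Z$, it is a Baire space, so the countable dense-in-itself set $D$ cannot be $G_\delta$ in $\overline{D}$; hence $f^{-1}\bigl(\bigcup_n U_n\bigr)\notin\mathbf{\Delta}^0_2(X)$ and $f$ is not $\mathbf{\Delta}^0_2$-measurable. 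All the substantive work is in that recursion, carried out with the $\sigma$-ideal $\mathcal{I}_f$ and the irreducibility machinery of Lemmas~\ref{L:k3} and~\ref{L:k4}; your outline does not replace it.
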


\begin{proof}
To obtain a contradiction, we suppose that there is a $\mathbf{\Delta}_2^0$-measurable mapping $f  \colon X \rightarrow Y$ which is not piecewise continuous. Clearly, $f$ is $\mathbf{\Sigma}_2^0$-measurable. By Lemma~\ref{L-m}, there exist a countable subset $D = \{d_n \colon n \in \omega \}$ of $X$ and a sequence $\langle U_n \colon n \in \omega \rangle$ of disjoint open sets in $Y$ such that $\overline{D} \subset X$, every $f(d_n) \in U_n$, and $f(x) \notin \bigcup \{U_n \colon n \in \omega \}$ for every $x \in \overline{D} \setminus D$. This implies that
$ D = \overline{D} \cap f^{-1} \left( \bigcup \{U_n \colon n \in \omega \} \right)$.

$\overline{D}$ is a Baire space as a closed subset of the completely Baire space $Z$. Then $D$ is not a $G_\delta$-subset of $\overline{D}$; otherwise $\overline{D}$ would be a set of the first category. Since $D$ is not a $\mathbf{\Delta}_2^0$-subset of $X$, $f$ is not $\mathbf{\Delta}_2^0$-measurable, a contradiction.

For the other direction, consider a piecewise continuous mapping $f \colon X \rightarrow Y$. One can readily verify that $f^{-1}(A)$ is an $F_\sigma$-subset of $X$ for every closed $A \subset Y$. Then $f^{-1}(U)$ is $G_\delta$ in $X$ for every open $U \subset Y$. Moreover, $f^{-1}(U)$ is an $F_\sigma$-subset of $X$ for every open $U \subset Y$ because $X$ is a perfectly paracompact space.
Thus, $f$ is $\mathbf{\Delta}_2^0$-measurable.
\end{proof}

The basic Lemma~\ref{L-m} is given below.

\begin{rmk}
Theorem~\ref{Th-m} is proved in \textsc{ZFC}. Under the Martin Axiom, Banakh and Bokalo~\cite[Example 9.3]{BB} jointed with Zdomskyy constructed a $\mathbf{\Delta}_2^0$-measurable mapping $f \colon X \rightarrow Y$ between metrizable separable spaces which is not piecewise continuous.
\end{rmk}

\begin{rmk}
As noted above, when $X$ is an absolute Souslin-$\mathcal{F}$ set, the set $\overline{D}$ in the proof of Theorem~\ref{Th-m} can be chosen homeomorphic to the Cantor set $2^\omega$. This not to be true in general. For example, take the Bernstein set as the space $Z$. Recall that the Bernstein set is a metrizable completely Baire space which contains no closed subset homeomorphic to the Cantor set.
\end{rmk}

\begin{rmk}
Let us show that Theorem~\ref{Th-m} can be applied to a non-absolute Souslin-$\mathcal{F}$ set $X$. As an example, take a Souslin-$\mathcal{F}$ subset $X$ of the Sorgenfrey line $\mathbb{S}$.
\end{rmk}

To prove Theorem~\ref{Th-m}, we shall modify the technique due to Ka\v{c}ena, Motto Ros, and Semmes \cite{KMS}. Therefore, the terminology and methods from \cite{KMS} are applied.

The sets $A, B \subset Y$ are \textit{strongly disjoint} if $\overline{A} \cap \overline{B} = \emptyset$.

Given a mapping $f \colon X \rightarrow Y$, let us denote by $\mathcal{I}_f$ the family of all subsets $A \subset X$ for which there is a set $F \in \mathbf{\Sigma}^0_2(X)$ such that $A \subset F$ and the restriction $f | F$ is piecewise continuous. In particular, $f$ is piecewise continuous if and only if $X \in \mathcal{I}_f$. Clearly, the family $\mathcal{I}_f$ is a $\sigma$-ideal.

Let $x \in X$, $X^\prime \subset X$, and $A \subset Y$. Define $A^f = f^{-1}(Y \setminus \overline{A})$. The pair $(x, X^\prime)$ is said to be $f$-\textit{irreducible outside} $A$ if for every neighborhood $V \subset X$ of $x$ we have $A^f \cap X^\prime \cap V \notin \mathcal{I}_f$. Otherwise we say that $(x, X^\prime)$ is  $f$-\textit{reducible outside} $A$, i.e., there exist a neighborhood $V$ of $x$ and a set $F \in \mathbf{\Sigma}^0_2(X)$ such that $A^f \cap X^\prime \cap V \subset F$ and $f | F$ is piecewise continuous. Clearly, $x \in \overline{A^f \cap X^\prime}$ if $(x, X^\prime)$ is $f$-irreducible outside $A$.

\begin{lem}\label{L:if}
Let $f \colon X \rightarrow Y$ be a mapping from a perfectly paracompact space $X$ to a space $Y$. Given $X^\prime \subset X$, let $G = \bigcup\{U \mbox{ is open in } X \colon U \cap X^\prime \in \mathcal{I}_f \}$. Then $ X^\prime \cap G \in \mathcal{I}_f$.
\end{lem}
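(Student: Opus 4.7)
The plan is as follows. Since $X$ is perfectly paracompact and $G$ is open, I can write $G=\bigcup_{n\in\omega}F_n$ with each $F_n$ closed in $X$. Because $\mathcal{I}_f$ is a $\sigma$-ideal, the lemma will follow once I show $F_n\cap X^\prime\in\mathcal{I}_f$ for every $n$, so I fix such an $n$ for the rest of the argument.

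The closed subspace $F_n$ is paracompact, and by definition of $G$ the collection $\{U\cap F_n:U\text{ open in }X,\ U\cap X^\prime\in\mathcal{I}_f\}$ is an open cover of $F_n$. I would pass to a locally finite (in $F_n$) open refinement $\{V_\alpha\}_{\alpha\in A}$ of this cover; since $F_n$ is closed in $X$, this family is also locally finite in $X$. Each $V_\alpha\cap X^\prime$ is contained in some $U\cap X^\prime\in\mathcal{I}_f$, hence $V_\alpha\cap X^\prime\in\mathcal{I}_f$ by heredity. The main obstacle is that $A$ can be uncountable, so the $\sigma$-ideal property alone is not enough to conclude $\bigcup_\alpha(V_\alpha\cap X^\prime)\in\mathcal{I}_f$.

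To overcome this, I would show that a locally finite union of $\mathcal{I}_f$-sets stays in $\mathcal{I}_f$, via a pasting argument. Unpacking the definition, for each $\alpha$ I can choose closed sets $C_{\alpha,k}\subset X$, $k\in\omega$, with $V_\alpha\cap X^\prime\subset\bigcup_k C_{\alpha,k}$ and $f|C_{\alpha,k}$ continuous: a $\mathbf{\Sigma}^0_2$-witness splits into closed-in-$X$ layers, each of which can be further intersected with the closed pieces of the piecewise-continuity decomposition. Replacing $C_{\alpha,k}$ by $C_{\alpha,k}\cap\overline{V_\alpha}$ preserves these properties and forces, for each fixed $k$, the family $\{C_{\alpha,k}\}_{\alpha\in A}$ to be locally finite in $X$ (being dominated by the locally finite family $\{\overline{V_\alpha}\}$). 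Hence $B_k:=\bigcup_{\alpha\in A}C_{\alpha,k}$ is closed in $X$ as a locally finite union of closed sets, and the pasting lemma for locally finite closed covers yields that $f|B_k$ is continuous. Therefore $\bigcup_\alpha(V_\alpha\cap X^\prime)\subset\bigcup_k B_k$ is a $\mathbf{\Sigma}^0_2$-set on which $f$ is piecewise continuous, so $F_n\cap X^\prime\in\mathcal{I}_f$. Taking the union over $n$ gives $X^\prime\cap G\in\mathcal{I}_f$.
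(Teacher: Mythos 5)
Your proof is correct and follows essentially the same route as the paper: write $G$ as a countable union of closed sets (perfect paracompactness), take a locally finite open refinement of the cover by the sets $U$ with $U\cap X'\in\mathcal{I}_f$, intersect the closed witnesses with the closures $\overline{V_\alpha}$ so that each layer becomes a locally finite closed family, and conclude by the pasting lemma and the $\sigma$-ideal property. The only (harmless) difference is that you refine inside each closed piece $F_n$ separately, using that closed subspaces of paracompact spaces are paracompact, whereas the paper invokes Michael's theorem to get paracompactness of the $F_\sigma$ set $G$ and takes a single refinement, intersecting with the closed pieces $G_k$ at the end.
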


\begin{proof}
Denote by $\mathcal{U}$ the family $\{U \mbox{ is open in } X \colon U \cap X^\prime \in \mathcal{I}_f \}$. For every  $U \in \mathcal{U}$ choose a sequence of closed sets $F_n(U)$ such that $U \bigcap X^\prime \subset \bigcup \{F_n(U) \colon n \in \omega \}$ and all restrictions $ f | F_n(U)$ are continuous. Since $X$ is a perfectly paracompact space, we have $G = \bigcup\{G_k \colon k \in \omega \}$, where each $G_k$ is closed in $X$.  By the Michael theorem (see \cite[Theorem 5.1.28]{Eng}), $G$  is paracompact itself. Let  $\mathcal{V}$ be a locally finite open refinement of $\mathcal{U}$. For every $V \in \mathcal{V}$ fix $U_V \in \mathcal{U} $ such that $V \subset U_V$. Define $F^k_n = \bigcup\{\overline{V} \cap F_n(U_V) \cap G_k\colon V \in \mathcal{V}\}$. The local finiteness of the family $\mathcal{V}$ implies that all $F^k_n$ are closed in $X$ and all restrictions $ f | F^k_n$ are continuous. Since $ X^\prime \cap G \subset \bigcup\{F^k_n \colon k \in \omega, n \in \omega\}$, we obtain $ X^\prime \cap G \in \mathcal{I}_f$.
\end{proof}

The following lemma is a slight modification of \cite[Lemma 3]{KMS}; in \cite{KMS} it was proved for a metrizable space $X$.

\begin{lem}\label{L:k3}
Let $X$ be a perfectly paracompact space and $Y$ a regular space. Suppose $f \colon X \rightarrow Y$ is a $\mathbf{\Sigma}^0_2$-measurable mapping, $X^\prime$ is a subset of $X$, and $A \subset Y$ is an open set such that $X^\prime \subset A^f$. Then the following assertions are equivalent:

\begin{enumerate}[\upshape(i)]

\item $X^\prime \notin \mathcal{I}_f$,

\item there exist a point $x \in \overline{X^\prime} \cap A^f$ and an open set $U \subset Y$ strongly disjoint from $A$ such that $f(x) \in U$ and the pair $(x, X^\prime)$ is $f$-irreducible outside $U$.
\end{enumerate}
\end{lem}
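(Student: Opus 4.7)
The plan is to verify the two implications separately.

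For direction (ii) $\Rightarrow$ (i), the argument is immediate. If $x$ and $U$ are as in (ii), then taking the neighborhood $V = X$ in the definition of $f$-irreducibility yields $U^f \cap X' \notin \mathcal{I}_f$. Since $\mathcal{I}_f$ is a $\sigma$-ideal (hence closed under subsets) and $U^f \cap X' \subseteq X'$, one concludes $X' \notin \mathcal{I}_f$.

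Direction (i) $\Rightarrow$ (ii) is the substantive one, and I would prove it by contrapositive: assume (ii) fails and deduce $X' \in \mathcal{I}_f$. Modelled on \cite[Lemma 3]{KMS}, let $G = \bigcup\{V \text{ open in } X \colon V \cap X' \in \mathcal{I}_f\}$; Lemma \ref{L:if} then yields $X' \cap G \in \mathcal{I}_f$, so the whole problem reduces to showing $X' \subseteq G$, i.e., that every $x \in X'$ admits an open neighborhood $V$ with $V \cap X' \in \mathcal{I}_f$.

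Given $x \in X'$, the inclusion $X' \subseteq A^f$ gives $f(x) \notin \overline{A}$, and regularity of $Y$ provides an open $U$ strongly disjoint from $A$ with $f(x) \in U$. Since $x \in X' \subseteq \overline{X'} \cap A^f$ and (ii) fails, $(x, X')$ is $f$-reducible outside $U$, furnishing an open neighborhood $V$ of $x$ with $U^f \cap X' \cap V \in \mathcal{I}_f$. Decomposing $V \cap X' = (U^f \cap X' \cap V) \cup (X' \cap V \cap f^{-1}(\overline{U}))$, the first summand lies already in $\mathcal{I}_f$, and the task reduces to establishing that the residual piece $X' \cap V \cap f^{-1}(\overline{U})$ is itself in $\mathcal{I}_f$.

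Handling this residual set is the main obstacle. The ingredients I would combine are: the $\mathbf{\Sigma}_2^0$-measurability of $f$, which makes $f^{-1}(\overline{U})$ a $G_\delta$-subset of $X$; regularity of $Y$, which lets me build a nested refinement $U \supseteq \overline{U_1} \supseteq U_1 \supseteq \overline{U_2} \supseteq \cdots$ in the family of opens strongly disjoint from $A$ with $f(x) \in U_n$; the countably many reducibility witnesses obtained by applying the failure of (ii) to each $U_n$; and a further appeal to Lemma \ref{L:if}, where perfect paracompactness of $X$ is used essentially to pass from locally reducible pieces to a global $\mathcal{I}_f$-conclusion. These pieces, assembled via the $\sigma$-ideal structure of $\mathcal{I}_f$, cover the residual set and place it in $\mathcal{I}_f$, completing the contrapositive.
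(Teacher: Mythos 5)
Your (ii) $\Rightarrow$ (i) direction and the initial setup for (i) $\Rightarrow$ (ii) (the set $G$ and the appeal to Lemma~\ref{L:if}) match the paper. But the core of the hard direction has a genuine gap. First, your reduction ``it suffices to show $X' \subseteq G$'' is not actually a reduction: given Lemma~\ref{L:if}, the statement $X' \subseteq G$ is \emph{equivalent} to $X' \in \mathcal{I}_f$, so nothing has been gained. More seriously, the residual set $X' \cap V \cap f^{-1}(\overline{U})$ cannot be disposed of by the ingredients you list. If you apply the failure of (ii) at a point $y$ of the residual set with a new neighbourhood $U_y \ni f(y)$, you remove only $(U_y)^f \cap X' \cap V_y$ and are left with a new residual set $X' \cap V_y \cap f^{-1}(\overline{U_y})$ --- the same problem recurs and the recursion never terminates. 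The nested refinement $U \supseteq \overline{U_1} \supseteq U_1 \supseteq \cdots$ does not help either: the corresponding reducibility witnesses $V_n$ are neighbourhoods of the \emph{fixed} point $x$, and $\bigcap_n V_n$ need not be a neighbourhood of $x$, so no single $V$ with $V \cap X' \in \mathcal{I}_f$ is produced.

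The paper's proof avoids this trap by a global, not pointwise, argument. It sets $Z = A^f \cap (\overline{X'} \setminus G)$, which is an $F_\sigma$-set in $X$ (here $\mathbf{\Sigma}^0_2$-measurability is used to make $A^f$ an $F_\sigma$), and proves that $f|Z$ is \emph{continuous}; then $X' \cap Z \in \mathcal{I}_f$ because a continuous restriction to an $F_\sigma$-set is piecewise continuous, and $X' = (X' \cap Z) \cup (X' \cap G)$ since $X' \subseteq A^f$. The continuity of $f|Z$ is where the real work happens, and the key trick --- absent from your sketch --- is a \emph{two-point} argument: if $f|Z$ were discontinuous at $x \in Z$ with respect to $U$, one finds $x' \in V \cap Z$ with $f(x') \notin \overline{U} \cup \overline{A}$, separates $f(x')$ from $U$ and $A$ by an open $U'$ with $\overline{U} \cap \overline{U'} = \emptyset$, and observes that then $X' \cap V \cap V' \subseteq U^f \cup (U')^f$ is a union of two sets already known to lie in $\mathcal{I}_f$; this forces $x' \in G$, contradicting $x' \in Z$. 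It is precisely this covering of \emph{all} of $X'$ near $x'$ by the two ``outside'' preimages that eliminates the residual set, and your proposal contains no substitute for it.
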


\begin{proof}
(ii) $\Rightarrow$ (i): If $(x, X^\prime)$ is $f$-irreducible outside $U$, then $U^f \cap X^\prime \cap X \notin \mathcal{I}_f$. Hence, $X^\prime \notin \mathcal{I}_f$.

(i) $\Rightarrow$ (ii):
Denote by $G$ the open set $\bigcup\{O \mbox{ is open in } X \colon O \cap X^\prime \in \mathcal{I}_f \}$. By Lemma~\ref{L:if} we have $ X^\prime \cap G \in \mathcal{I}_f$.

Assume toward a contradiction that (ii) does not hold, i.e., for every $x \in \overline{X^\prime} \cap A^f$ and every open set $U \subset Y$ strongly disjoint from $A$ such that $f(x) \in U$ we have that $(x, X^\prime)$ is $f$-reducible outside $U$.

The intersection $Z = A^f \cap (\overline{X^\prime} \setminus G)$ is an $F_\sigma$-set in $X$. We claim that the restriction $f | Z$ is continuous. Suppose otherwise, so that there are $x \in Z$ and an open set $U \subset Y$ such that $f(x) \in U$ and there is no neighborhood $V$ of $x$ with $f(V \cap Z) \subset \overline{U}$. Because $f(x) \notin \overline{A}$ by the assumption, we can assume that $\overline{U} \cap \overline{A} = \emptyset$. Fix a neighborhood $V$ of $x$ satisfying $U^f \cap X^\prime \cap V \in \mathcal{I}_f$. By our hypothesis there is $x^\prime \in V \cap Z$ such that $f(x^\prime) \notin \overline{U} \cup \overline{A}$. Using regularity of $Y$, we can find a neighborhood $U^\prime$ of $f(x^\prime)$ which is strongly disjoint from $U$ and $A$. Let now $V^\prime$ be a neighborhood of $x^\prime$ given by the failure of (ii), i.e. $(U^\prime)^f \cap X^\prime \cap V^\prime \in \mathcal{I}_f$. Since $\overline{U} \cap \overline{U^\prime} = \emptyset$, we have
$ X^\prime \cap V \cap V^\prime \subset U^f \cup (U^\prime)^f$. Therefore $ X^\prime \cap V \cap V^\prime \in  \mathcal{I}_f$. In other words, $ X^\prime \cap V \cap V^\prime \subset X^\prime \cap G$. But this implies $x^\prime \notin Z$, a contradiction. Thus $f | Z$ is continuous. Hence $X^\prime \cap Z \in \mathcal{I}_f$.

On the whole, we get $X^\prime = (X^\prime \cap Z) \cup (X^\prime \cap G) \in \mathcal{I}_f$, a contradiction with (i).
\end{proof}

\begin{lem}[{\cite{KMS}}]\label{L:k4}
Let $f \colon X \rightarrow Y$ be a mapping of a space $X$ to a space $Y$, $x \in X$, $X^\prime \subset X$, $A \subseteq Y$, and let $U_0, \ldots, U_k$ be a sequence of pairwise strongly disjoint open subsets of $Y$. If $(x; X^\prime)$ is $f$-irreducible outside $A$, then there is at most one $i \in \{ 0, \ldots, k \}$ such that $(x, X^\prime)$ is $f$-reducible outside $A \cup U_i$.
\end{lem}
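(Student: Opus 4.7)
The plan is to argue by contradiction: assume that there are two distinct indices $i \neq j$ in $\{0, \ldots, k\}$ such that $(x, X')$ is $f$-reducible outside $A \cup U_i$ and also outside $A \cup U_j$, and then derive that $(x, X')$ is $f$-reducible outside $A$, contradicting the hypothesis of irreducibility.

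From the assumed reducibilities I would extract neighborhoods $V_i, V_j$ of $x$ and sets $F_i, F_j \in \mathbf{\Sigma}^0_2(X)$ with $(A \cup U_i)^f \cap X' \cap V_i \subset F_i$, $(A \cup U_j)^f \cap X' \cap V_j \subset F_j$, and with $f | F_i$, $f | F_j$ piecewise continuous. Putting $V := V_i \cap V_j$ (still a neighborhood of $x$) I would then bound $A^f \cap X' \cap V$ above by $F_i \cup F_j$, which lies in $\mathcal{I}_f$ because $\mathcal{I}_f$ is a $\sigma$-ideal (and $\mathbf{\Sigma}^0_2(X)$ is closed under finite unions, with the restriction to the union still piecewise continuous). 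This would give $A^f \cap X' \cap V \in \mathcal{I}_f$, i.e.\ $f$-reducibility outside $A$.

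The crux of the argument, and the only step requiring the hypothesis of strong disjointness, is the set-theoretic identity
\[
A^f \;=\; (A \cup U_i)^f \cup (A \cup U_j)^f.
\]
To verify it, I would use that closure distributes over finite unions, so $\overline{A \cup U_i} = \overline{A} \cup \overline{U_i}$, and then compute
\[
(A \cup U_i)^f \cup (A \cup U_j)^f = f^{-1}\bigl(Y \setminus ((\overline{A} \cup \overline{U_i}) \cap (\overline{A} \cup \overline{U_j}))\bigr).
\]
Since $\overline{U_i} \cap \overline{U_j} = \emptyset$ by strong disjointness, the intersection in the display collapses to $\overline{A}$, yielding exactly $f^{-1}(Y \setminus \overline{A}) = A^f$.

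I do not anticipate any real obstacle: the regularity and paracompactness assumptions on $X$ and $Y$ are not needed here, and neither is $\mathbf{\Sigma}^0_2$-measurability of $f$; the lemma is purely formal once one has the displayed identity. The only subtlety is making sure that $F_i \cup F_j$ still witnesses membership in $\mathcal{I}_f$, which is immediate from the $\sigma$-ideal property already noted in the text.
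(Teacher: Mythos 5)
Your proposal is correct and follows essentially the same argument as the paper: assume reducibility outside both $A \cup U_i$ and $A \cup U_j$, intersect the two witnessing neighborhoods, and use strong disjointness of $U_i$ and $U_j$ to conclude reducibility outside $A$. The paper's version is merely terser, leaving implicit the identity $A^f = (A \cup U_i)^f \cup (A \cup U_j)^f$ that you verify explicitly.
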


For the sake of completeness, we reproduce the proof of Lemma~\ref{L:k4} from~\cite{KMS}.

\begin{proof}
Assume that there are two indices $i, j \in \{0, \ldots , n \}$, $i \neq j$, such that $(x, X^\prime)$ is $f$-reducible outside both $A \cup U_i$ and $A \cup U_j$. Then there are neighborhoods $V_i$ and $V_j$ of $x$ such that $(A \cup U_i)^f \cap X^\prime \cap V_i \in \mathcal{I}_f$ and $(A \cup U_j)^f \cap X^\prime \cap V_j \in \mathcal{I}_f$. Since $U_i$
and $U_j$ are strongly disjoint, this implies that
\[A^f \cap X^\prime \cap V_i \cap V_j  \in \mathcal{I}_f,\]
and thus $V_i \cap V_j$ contradicts the fact that  $(x, X^\prime)$ is $f$-irreducible outside $A$.
\end{proof}

\begin{lem}\label{L-m}
Let $Y$ be a regular space and $X$ be a Souslin-$\mathcal{F}$ set in a regular space $Z$ such that $X$ is a perfectly paracompact space satisfying the first axiom of countability. Let $f \colon X \rightarrow Y$ be a $\mathbf{\Sigma}_2^0$-measurable mapping which is not piecewise continuous.

Then there exist a countable subset $D = \{d_n \colon n \in \omega \}$ of $X$ and a sequence $\langle U_n \colon n \in \omega \rangle$ of disjoint open sets in $Y$ such that
\begin{enumerate}[\upshape (i)]
\item $f(d_n) \in U_n$ for every $n \in \omega $,

\item $D$ is homeomorphic to the space of rational numbers,

\item $\overline{D} \subset X$, where the bar denotes the closure in $Z$,

\item $f(x) \notin \bigcup \{U_n \colon n \in \omega \}$ for every $x \in \overline{D} \setminus D$.
\end{enumerate}
\end{lem}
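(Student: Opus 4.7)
I follow the fusion argument of Ka\v{c}ena, Motto Ros, and Semmes \cite{KMS}, adapted from the $2^{<\omega}$-scheme there (which produces a Cantor set in the metric setting) to an $\omega^{<\omega}$-scheme yielding here a countable $\mathbb{Q}$-set. First countability of $X$ replaces metrizability in arranging child-to-parent convergence; a fixed regular Souslin scheme $\{F_t : t \in \omega^{<\omega}\}$ of closed subsets of $Z$ with $X = \bigcup_{\tau \in \omega^\omega}\bigcap_n F_{\tau|n}$ is used to force $\overline{D}^Z \subset X$; Lemmas \ref{L:k3} and \ref{L:k4} govern the selection of the $d_s$ and ensure pairwise strong disjointness of the $U_s$'s.

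\textbf{Recursion.} For $s \in \omega^{<\omega}$, processed in a BFS enumeration, produce $(d_s, V_s, U_s, \tau_s, X_s, A_s)$ satisfying: $d_s \in X \cap V_s \cap F_{\tau_s}$ and $f(d_s) \in U_s$; $V_s$ is $Z$-open, $|\tau_s| \geq |s|$, $\tau_{s^\frown k}$ strictly extends $\tau_s$, and $\overline{V_{s^\frown k}}^Z \subset F_{\tau_{s^\frown k}} \cap V_s$; the $V_{s^\frown k}$'s are pairwise disjoint in $k$ with $\{V_{s^\frown k} \cap X\}_k$ forming a neighborhood base of $d_s$ in $X$ (obtainable by combining first countability of $X$ with regularity of $Z$ and the scheme extension at each step); the entire collection of $U_t$'s defined so far is pairwise strongly disjoint; $A_s := \bigcup\{U_t : t \text{ defined up to } s\}$; and $(d_s, X_s)$ is $f$-irreducible outside $A_s$, with $X_s$ an inductively chosen subset of $V_s \cap X$ contained in the ``$f$-irreducible-outside-$A_s$'' part inherited from the parent stage. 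The base case uses $X \notin \mathcal{I}_f$ (since $f$ is not piecewise continuous) together with Lemma \ref{L:k3} applied to $X' = X$, $A = \emptyset$. At the inductive step for $s^\frown k$, pick $W_k \subset V_s$ from the neighborhood base at $d_s$ with $\overline{W_k}^Z \subset F_{\tau_s^\frown \ell_k}$; since $A_s^f \cap X_s \cap W_k \notin \mathcal{I}_f$ by the inductive irreducibility, Lemma \ref{L:k3} with $A = A_s$ yields $(d_{s^\frown k}, U_{s^\frown k})$ with $U_{s^\frown k}$ strongly disjoint from $A_s$; Lemma \ref{L:k4}, applied to the finitely many previously constructed $U_t$'s, then refines $U_{s^\frown k}$ so as to be strongly disjoint from each of them, preserving irreducibility.

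\textbf{Verification.} Set $D = \{d_s : s \in \omega^{<\omega}\}$ and re-enumerate as $(d_n)$. Condition (i) is built in. For (ii), each $d_s$ is an $X$-limit of its children $(d_{s^\frown k})_k$, so $D$ has no isolated points; being countable, first countable, regular, and Hausdorff as a subspace of $X$, $D$ is second-countable metrizable, so $D \cong \mathbb{Q}$ by Sierpi\'nski's characterization. For (iii), any $x \in \overline{D}^Z \setminus D$ must, by pairwise disjointness of the $V$'s at each level, lie in $\bigcap_n \overline{V_{\sigma|n}}^Z$ for a uniquely determined branch $\sigma \in \omega^\omega$, and hence in $\bigcap_n F_{\tau_{\sigma|n}} \subset X$ (since $|\tau_{\sigma|n}| \to \infty$). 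For (iv), one verifies that $f|_{\overline{D}^Z}$ is sequentially continuous; granted this, any $x \in \overline{D}^Z \setminus D$ is the limit of an injective sequence $(d_{t_n})$ in $D$ with $f(d_{t_n}) \in U_{t_n}$, and the pairwise strong disjointness of the $U_t$'s precludes $f(x) \in U_m$ for any $m$ (else by continuity $f(d_{t_n}) \in U_m$ eventually, forcing $t_n = m$ for large $n$, against injectivity of $(t_n)$).

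\textbf{Main obstacle.} The decisive technical step is arranging that $f|_{\overline{D}^Z}$ is (sequentially) continuous, which is what makes (iv) go through. This requires an additional layer of bookkeeping at each recursion stage: for every previously defined $U_m$, use that $d_s \in f^{-1}(Y \setminus \overline{U_m})$ -- an $F_\sigma$ set in $X$, by $\mathbf{\Sigma}^0_2$-measurability of $f$ -- to pick a closed piece of this set containing $d_s$, and then shrink $V_s$ compatibly with both the Souslin scheme and these $F_\sigma$-pieces, exploiting perfect paracompactness of $X$ to keep the decompositions jointly refinable. Carrying out this diagonal refinement simultaneously over all previously defined $m$ without destroying the $f$-irreducibility produced by Lemmas \ref{L:k3} and \ref{L:k4} is the delicate point, and is the exact analogue in the first-countable, non-metrizable setting of the key step of \cite[Lemma 5]{KMS}.
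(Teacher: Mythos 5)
Your overall strategy (the Ka\v{c}ena--Motto Ros--Semmes fusion governed by Lemmas \ref{L:k3} and \ref{L:k4}) is the right one, but two of the four conclusions are not actually proved, and the tree you chose makes one of them break. First, for (iii): with an $\omega^{<\omega}$-scheme the union of the children's pieces $\bigcup_k \overline{V_{s^\frown k}}$ is an \emph{infinite} union of closed sets and need not be closed in $Z$, so a point of $\overline{D}$ need not lie in $\overline{V_{\sigma|n}}$ for any branch $\sigma$: it can be a cluster point in $Z$ of the infinite family $\{d_{s^\frown k}\}_k$ lying outside every $\overline{V_{s^\frown k}}$ (and possibly outside $X$). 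Your assertion that every $x\in\overline{D}\setminus D$ determines a branch is therefore unjustified, and this is exactly why the paper indexes the construction by $2^{<\omega}$ and repeats the point along the $0$-direction ($x_{t\hat{\:}0}=x_t$, $U_{t\hat{\:}0}=U_t$): then $D\subset V^*_n=\bigcup\{\overline{V_s}\colon s\in 2^n\}$ with $V^*_n$ a \emph{finite}, hence closed, union, so $\overline{D}\subset\bigcap_n V^*_n$ and every point of $\overline{D}$ does follow a branch; the Souslin scheme (via the map $h$, whose length is increased only at $1$-steps) then places every such point in $X$. (Also, as literally written, ``the $V_{s^\frown k}$ are pairwise disjoint and $\{V_{s^\frown k}\cap X\}_k$ forms a neighborhood base at $d_s$'' is contradictory -- pairwise disjoint sets cannot all contain $d_s$; you presumably mean $V_{s^\frown k}\subset W_k$ for a shrinking base $\langle W_k\rangle$.)

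Second, and more seriously, you name ``arranging that $f|\overline{D}$ is sequentially continuous'' as the decisive step for (iv) and then do not carry it out; the ``additional layer of bookkeeping'' is only a plan, so (iv) is left open. The paper neither proves nor needs any continuity of $f$ on $\overline{D}$. Instead the avoidance is built directly into the recursion: at the step creating a genuinely new point $x_{s\hat{\:}1}$, the $\mathbf{\Sigma}^0_2$-measurability of $f$ gives a set $C$ \emph{closed in} $X$ with $C\subset f^{-1}(Y\setminus\overline{A})$ (where $A$ is the union of all previously chosen $U$'s) and $C\cap V_s\notin\mathcal{I}_f$, and $V_{s\hat{\:}1}$ is taken inside $C$; this yields condition (6), namely $f(X\cap\overline{V_{s\hat{\:}1}})\cap\overline{\bigcup_{r\prec s\hat{\:}1}U_r}=\emptyset$. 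Since every $x\in\overline{D}\setminus D$ lies in $\overline{V_{\alpha_x|n}}$ for infinitely many $n$ at which the last digit of $\alpha_x|n$ is $1$, (iv) follows at once. You should replace the continuity plan with this device -- it is also where the $\mathbf{\Sigma}^0_2$-measurability hypothesis is genuinely used -- and switch to the binary scheme with repeated points.
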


\begin{proof}
Let $X$ have a representation $X = \bigcup \left\{\bigcap \{F_{\alpha|n} \colon n \in \omega \} \colon \alpha \in \omega^\omega \right\}$, where each $F_{\alpha|n}$ is closed in $Z$ and $F_{\alpha|n+1} \subset F_{\alpha|n}$. For every $t \in \omega^{<\omega}$ we denote
\[ X_t= \bigcup \left\{\bigcap\nolimits_{n \in \omega} F_{t \hat{\:} \beta|n} \colon \beta \in \omega^\omega \right\}.
\]
Clearly,  $X_t \subset F_t$ and $X_t = \bigcup\{X_{t \hat{\:} n} \colon n \in \omega \}$ for every $t \in \omega^{<\omega}$.

Denote by $2^{<\omega}$ the set of all binary sequences of finite length. The construction will be carried out by induction with respect to the order $\preceq$ on $2^{<\omega}$ defined by
\[s \preceq t \, \Longleftrightarrow \, \length(s) < \length(t) \vee (\length(s) = \length(t) \wedge s \leq_\mathrm{lex} t), \]
where $\leq_\mathrm{lex}$ is the usual lexicographical order on $2^{\length(s)}$. We write $s \prec t$ if $s \preceq t$ and $s \neq t$.

The map $\upsilon$ assigns to each $t \in 2^{<\omega}$ the length of a string of zeros at the end of $t$; for example, $\upsilon(110100)=2$.

We will construct a sequence $\langle x_s \colon s \in 2^{<\omega} \rangle$ of points of $X$, a sequence $\langle W^s_n \colon  n \in \omega  \rangle$ of open subsets of $Z$, a sequence $\langle V_s \colon s \in 2^{<\omega}\rangle$ of subsets of $X$, a sequence $\langle U_s \colon s \in 2^{<\omega} \rangle$ of open subsets of $Y$, and a mapping $h \colon 2^{<\omega} \rightarrow \omega^{<\omega}$ such that for every $s \in 2^{<\omega}$:

\begin{enumerate}[\upshape (1)]
\item $\langle X \cap W^s_n \colon  n \in \omega  \rangle$ forms a base at the point $x_s$ with respect to $X$ and $\overline{W^s_{n+1}} \subset W^s_n$ for each $n \in \omega $,

\item if $t \subset s$ then $V_s \subset V_t$,

\item $x_s \in X \cap \overline{V_s}$ and $\overline{V_s} \subset W^s_0$,

\item if $s = t \hat{\:} 0$ then $x_s = x_t$ and $U_s = U_t$,

\item $f(x_s) \in U_s$,

\item if the last digit of $s$ is 1 then $f(X \bigcap \overline{V_s}) \cap \overline{\bigcup_{r \prec s} U_r}= \emptyset$,

\item $(x_t, V_t)$ is $f$-irreducible outside $A$ for every $t \preceq s$, where $A = \bigcup \{U_r \colon r \preceq s \}$,

\item the family $\{V_t \colon t \in 2^{n} \}$ is pairwise strongly disjoint in $X$ for every $n \in \omega$,

\item the family $\{U_t \colon t \preceq s \; \wedge$ (the last digit of $t$ is 1) or $t = \emptyset \}$ is pairwise strongly disjoint,

\item if $t \varsubsetneq s$ then $h(t) \subset h(s)$ and if, moreover, the last digit of $s$ is 1 then $\length(h(t)) < \length(h(s))$,

\item $V_s \subset X_{h(s)} \subset F_{h(s)}$,

\item $\overline{V_{s \hat{\:} 0}} \subset W^s_{1+\upsilon(s)}$ and $\overline{V_{s \hat{\:} 1}} \subset W^s_{\upsilon(s)} \setminus \overline{W^s_{1+\upsilon(s)}}$.
\end{enumerate}

For the base step of the induction, let $x$ and $U$ be given as in Lemma~\ref{L:k3} applied to $X^\prime = X$ and $A = \emptyset$. Then put $ x_\emptyset = x$, $U_\emptyset = U$, and $h(\emptyset) = \emptyset$. Choose a sequence $\langle W^\emptyset_n \colon  n \in \omega  \rangle$ satisfying condition (1) for $s = \emptyset$. Define $V_\emptyset = X \cap W^\emptyset_1$.

Assume that $x_t$, $W^t_n$, $V_t$, and $U_t$ have been constructed for every $t \prec s \hat{\:} 0$. Let $A = \bigcup \{U_t \colon t \prec s \hat{\:} 0 \}$ and $O = Y \setminus \overline{A}$. By the inductive hypothesis, condition (7) says that $(x_s, V_s)$ is $f$-irreducible outside $A$. Then
\[f^{-1}(O) \cap V_s \cap W^s_{\upsilon(s)}= A^f \cap V_s \cap W^s_{\upsilon(s)} \notin \mathcal{I}_f.\]
The decreasing sequence $\langle X \cap W^s_n \colon  n \in \omega  \rangle$ forms a base at $x_s$. Since $X$ is perfectly paracompact, every set $X \cap (W^s_n \setminus \overline{W^s_{n+1}})$ is $F_\sigma$ in $X$. Now we can find some $k > 0$ with
\[f^{-1}(O) \cap V_s \cap \left(W^s_{\upsilon(s)} \setminus \overline{W^s_{k+\upsilon(s)}} \right) \notin \mathcal{I}_f.\]
To simplify notation, we can assume that $k=1$. Since $f$ is $\mathbf{\Sigma}_2^0$-measurable, there exists a set $C$ closed in $X$ such that
\[C \subset f^{-1}(O) \cap \left(W^s_{\upsilon(s)} \setminus \overline{W^s_{1+\upsilon(s)}} \right)\]
and $C \cap V_s \notin  \mathcal{I}_f$. By construction, $V_s \subset X_{h(s)} = \bigcup \{X_{h(s)\hat{\:} n} \colon n \in \omega \}$. Then for some $n$ we have
\[X^\prime = C \cap V_s \cap X_{h(s)\hat{\:} n} \notin  \mathcal{I}_f.\]

Put $h(s \hat{\:} 0) = h(s)$ and $h(s \hat{\:} 1) = h(s) \hat{\:} n$.

Next, set $x_{s \hat{\:} 0} = x_s$, $U_{s \hat{\:} 0} = U_s$,  $W^{s \hat{\:} 0}_n = W^s_{n+1}$, and $V_{s \hat{\:} 0} = V_s \cap W^{s \hat{\:} 0}_1$.

\textbf{Claim.} There are $x_{s \hat{\:} 1} \in \overline{X^\prime} $ and $U_{s \hat{\:} 1} \subset Y$ such that $f(x_{s \hat{\:} 1}) \in U_{s \hat{\:} 1}$, $U_{s \hat{\:} 1}$ is open and strongly disjoint from $A$,
$(x_t, V_t)$ is $f$-irreducible outside $A \cup U_{s \hat{\:} 1}$ for every $t \prec s \hat{\:} 1$ and $(x_{s \hat{\:} 1}, X^\prime)$ is $f$-irreducible outside $A \cup U_{s \hat{\:} 1}$.

\textsc{Proof of the Claim.} Let $k = | \{t \in 2^{<\omega} \colon t \prec s \hat{\:} 1 \} |$. Using Lemma~\ref{L:k3}, for $\jmath = 0, \ldots, k$ recursively construct $x_\jmath$ and $U_\jmath$ such that $f(x_\jmath) \in U_\jmath$, $U_\jmath$ is open and strongly disjoint from $A \cup U_{<\jmath}$ (where $U_{<\jmath} = \emptyset$ if $\jmath = 0$ and $U_{<\jmath} = \bigcup \{U_i \colon i < \jmath \}$ otherwise), and $(x_\jmath, X^\prime \cap (U_{<\jmath})^f)$ is $f$-irreducible outside $U_\jmath$. According to Lemma~\ref{L:k4}, for each $t \prec s \hat{\:} 1$ there is at most one $j_t \in \{0, \ldots, k \}$ such that $(x_t, V_t)$ is $f$-reducible outside $A \cup U_{j_t}$. The pigeonhole principle implies that the claim is satisfied with $x_{s \hat{\:} 1} = x_{\jmath^*}$ and $U_{s \hat{\:} 1} = U_{\jmath^*}$ for some $\jmath^* \in \{0, \ldots, k \}$. $\triangle$

Choose a sequence $\langle W^{s \hat{\:} 1}_n \colon  n \in \omega  \rangle$ of open subsets of $Z$ such that $\langle X \cap W^{s \hat{\:} 1}_n \colon  n \in \omega  \rangle$ is a base at the point $x_{s \hat{\:} 1}$ satisfying $\overline{W^{s \hat{\:} 1}_0} \subset W^s_{\upsilon(s)} \setminus \overline{W^s_{1+\upsilon(s)}}$ and $\overline{W^{s \hat{\:} 1}_{n+1}} \subset W^{s \hat{\:} 1}_n$. Finally, set $V_{s \hat{\:} 1} = X^\prime \cap W^{s \hat{\:} 1}_1$. Obviously, $V_{s \hat{\:} 1} \subset V_s$.

By construction, $\overline{V_{s \hat{\:} 0}} \subset W^s_{1+\upsilon(s)}$ and $\overline{V_{s \hat{\:} 1}} \subset W^s_{\upsilon(s)} \setminus \overline{W^s_{1+\upsilon(s)}}$. Then $\overline{V_{s \hat{\:} 0}} \cap \overline{V_{s \hat{\:} 1}} = \emptyset$. Together with (2), this implies condition (8). One can check that all the conditions (1)--(12) are satisfied.

Define the set $D = \{x_s \colon s \in 2^{<\omega} \}$. The set $f(D)$ is relatively discrete in $Y$ because $f(x_s) \in U_s$ and $U_s \cap U_t = \emptyset$ whenever $x_s \neq x_t$. Clearly, the restriction $f | D$ is a bijection.

The set $D$ is countable and has a countable base at each point; hence, the space $D$ is second-countable. By the Urysohn theorem \cite[Theorem 4.2.9]{Eng}, $D$ is metrizable. From conditions (1) and (3) it follows that $D$ has no isolated points. The Sierpi\'{n}ski theorem \cite{Sie} implies that  $D$ is homeomorphic to the space of rationals.

Let us check that $\overline{D} \subset X$. From conditions (2)--(4) it follows that $D \subset V^*_n = \bigcup \{\overline{V_s} \colon s \in 2^n \}$ for each $n \in \omega$. Since $2^n$ is a finite set, $V^*_n$ is closed in $Z$. Then $\overline{D}$ belongs to a closed subset $V^* = \bigcap \{V^*_n \colon n \in \omega \}$ of $Z$. Clearly, $D \subset X$. Given a point $x \in \overline{D} \setminus D$, fix $s(x,n)$ such that $x \in \overline{V_{s(x,n)}}$ and $s(x,n) \in 2^n$ for each $n$. Using conditions (2) and (8), find a unique $\alpha_x \in 2^\omega$ with $s(x,n) = \alpha_x | n$. Since $x \notin D$, the sequence $\alpha_x$ contains infinitely many digits $1$. By (10), the sequence $\langle h(\alpha_x | n) \colon n \in \omega \rangle$ has infinitely many distinct members. From (11) it follows that $x \in \bigcap \{\overline{V_{\alpha_x | n}} \colon n \in \omega \} \subset \bigcap \{F_{h(\alpha_x | n)} \colon n \in \omega \} \subset X$. Therefore, $\overline{D} \subset X$.

To obtain (iv), take a point  $x \in \overline{D} \setminus D$. Striving for a contradiction, suppose that $f(x) \in U_r$ for some $r \in 2^{<\omega}$. By the above, $x \in \bigcap \{\overline{V_{\alpha_x | n}} \colon n \in \omega \}$. Since the sequence $\alpha_x$ contains infinitely many $1$, there is $n$ such that the last digit of $\alpha_x | n$ is $1$ and $r \prec \alpha_x | n$. From condition (6) it follows that $f(x) \notin \overline{U_r}$, a contradiction.

It remains to number the distinct elements of $D$ as $\{d_n \colon n \in \omega \}$.
\end{proof}

\end{document}